\newcommand{\al}{\alpha}
\newcommand{\R}{{\mathbb R}}
\newcommand{\abs}[1]{\left\vert#1\right\vert}
\newcommand{\norm}[1]{\left\lVert#1\right\rVert}
\newcommand{\set}[1]{\left\{#1\right\}}
\newcommand{\ex}[1]{\mathsf{E}\left[#1\right]}
\begin{document}

\title{Mixed fractional stochastic differential equations with jumps}


\author{Georgiy Shevchenko$^{\rm a}$$^{\ast}$\thanks{$^\ast$Corresponding author. Email: zhora@univ.kiev.ua
\vspace{6pt}}\\\vspace{6pt}  $^{\rm a}${\em{Department of Probability Theory, Statistics and Actuarial Mathematics,
Taras Shevchenko National University of Kyiv,
64 Volodymyrska, 01601 Kyiv, Ukraine}}}

\maketitle

\begin{abstract}
In this paper, we consider a stochastic differential equation driven by a fractional Brownian motion (fBm) and a Wiener process and having jumps.
We prove that this equation has a unique solution and show that all moments of the solution are finite.
\begin{keywords}
Fractional Brownian motion; Wiener process; Poisson measure; stochastic differential equation; moments
\end{keywords}
\begin{classcode}
Primary 60G15; Secondary 60G22, 60H10, 60J65.
\end{classcode}
\end{abstract}

\section{Introduction}

The main object of this paper is a stochastic differential equation
 \begin{equation}\label{sde-jumps}
 \begin{gathered}
 X_t =X_0 +\int_0^t a(s,X_{s})ds+
\int_0^tb(s,X_{s})dW_s+\int_0^tc(s,X_{s})dB_s^H\\
 +\int_0^t\int_{\R} q(s,X_{s-},y)\nu(ds,dy),\quad
t\in[0,T],
 \end{gathered}
 \end{equation}
where $W$ is a standard Wiener process; $B^H$ is a fractional Brownian motion (fBm) with the Hurst parameter $H\in(\frac{1}{2},1)$;
$\nu$ is a Poisson measure with finite intensity measure. 

Such equation can be used to model processes with jumps on financial markets, where two principal random noises influence
the prices. One noise is coming from economical background and has a long-range dependence, which is modeled by the fBm. Another noise
is intrinsic to the stock exchange, where millions of  agents act independently and behave  irrationally sometimes; this is a white noise
and it is modeled by a Wiener process.

Although equation \eqref{sde-jumps} were not studied before, many authors considered some particular examples.

For a pure fractional stochastic equation without Wiener component and jumps
 \begin{equation}\label{sde-pure}
X_t =X_0 +\int_0^t a(s,X_{s})ds+\int_0^tc(s,X_{s})dB_s^H,
 \end{equation}
existence and uniqueness of a solution to such equation was proved first in  \cite{kleptsyna, kubiliuspure,Ruzmaikina, russovallois}.
In \cite{Nual} this result was proved under weaker assumptions on coefficients and integrability of the solution was established
for $H>3/4$. For a homogeneous drift-less equation (i.e.\ $a(s,x)=0$, $c(s,x)=c(x)$), the integrability was shown in \cite{HuNual} for all $H>1/2$.

Mixed stochastic differential equation without jumps
 \begin{equation}\label{sde-mixed}
 X_t =X_0 +\int_0^t a(s,X_{s})ds+
\int_0^tb(s,X_{s})dW_s+\int_0^tc(s,X_{s})dB_s^H
 \end{equation}
was first considered  in  \cite{kubilius}, where unique solvability was proved for time-independent coefficients and zero drift, i.e.\ $a=0$. Later, in \cite{Zah05}, existence of solution to \eqref{sde-mixed} was proved under less restrictive assumptions, but only locally, i.e.\ up to a random time. In \cite{guernual} global existence and uniqueness of solution to \eqref{sde-mixed} was established under the assumption that $W$ and $B^H$ are independent. 
The latter result was obtained in \cite{mbfbm-sde} and \cite{mbfbm-limit} without the independence assumptions, and it was also shown in \cite{mbfbm-limit} that all moments of the solution are finite for $H>3/4$. It is also worth mentioning that article \cite{russovallois} contains related results, which imply, in particular, that \eqref{sde-mixed} has unique solution for $b(t,x) = c(t,x) = b(x)$. 

Recently, equation \eqref{sde-jumps} without Brownian component (i.e.\ $b=0$) was considered in \cite{baima}, where the existence of solution is proved
under weaker conditions on regularity of $a$ (only H\"older continuity in the second variable) and without the assumptions $\Pi(\R)<\infty$, $H>1/2$.
On the other hand, in that paper $c(t,x)\equiv1$ and $q(t,x,y)$ is independent of $x$, which are much stronger assumptions than those of the present paper.

In this  paper we show that \eqref{sde-jumps} has a unique solution. The main result is existence of moments of the solution, which is
an important property for applications. The existence of moments is proved for all $H>1/2$ in inhomogeneous case, 
which is a novelty in comparison to the results
of \cite{HuNual,Nual,mbfbm-limit}. The paper is organized as follows. Section~\ref{sec:prelim} gives basic definitions.
In Section~\ref{sec:exuniq} we prove existence and uniqueness of the solution to \eqref{sde-jumps}. Section~\ref{sec:exmom} contains results on integrability of solutions to
\eqref{sde-jumps}--\eqref{sde-mixed}.

\section{Preliminaries}\label{sec:prelim}

Let $\bigl(\Omega, \mathcal{F}, \mathbb{F} = \{\mathcal{F}_t, t\ge 0\},
\mathsf{P}\bigr)$ be a complete filtered probability space satisfying the usual assumptions.  Let also $\{W_t, t\ge 0\}$ be an
$\mathbb F$-Wiener process and $\{B^H_t, t\ge 0\}$ be an $\mathbb{F}$-adapted fractional Brownian motion (fBm), i.e.\ a centered Gaussian
process with the covariance $\ex{B_t^HB_s^H}=\frac{1}{2}(s^{2H}+t^{2H}-|t-s|^{2H})$. Let also $\nu$ be an $\mathbb{F}$-adapted Poisson measure with intensity measure $\Pi$, i.e.\  $$\ex{\nu(dt,dy)}=\Pi(dy)dt.$$ 
We will assume that the intensity measure $\Pi$ is finite: \begin{equation}\label{pifinite}
\Pi(\R)<\infty. 
\end{equation}
It is well known that $B^H$ has a modification with almost surely continuous paths (even H\"older continuous of any order up to $H$), and further we will assume that it is continuous itself.

Now we define how we understand the integrals in \eqref{sde-jumps}.
The integral with respect to the Wiener process $W$ is the standard It\^o integral, and it is well defined as long as $\int_0^t b(s,X_{s})^2ds<\infty$ almost surely. The integral with respect to $\nu$ is defined as usual. Since $\Pi(\R)<\infty$, the process
\begin{equation}\label{Lt}
L_t = \int_0^t\int_{\R} x\,\nu(ds,dx)
\end{equation}
is well defined and it is a pure jump process, which almost surely has finite number of jumps; we can assume that it is cadlag. So the integral with respect to $\nu$ is just a finite sum
$$
\int_0^t\int_{\R} q(s,X_{s-},y)\nu(ds,dy) = \sum_{s\in[0,t]} q(s,X_{s-},\Delta L_s),
$$
where $\Delta L_s = L_s-L_{s-}$.

As for the integral with respect to the fBm $B^H$, we use the generalized
Lebesgue--Stieltjes integral (see \cite{Zah98a}). Its construction uses the fractional derivatives
\begin{gather*}
\big(D_{a+}^{\alpha}f\big)(x)=\frac{1}{\Gamma(1-\alpha)}\bigg(\frac{f(x)}{(x-a)^\alpha}+\alpha
\int_{a}^x\frac{f(x)-f(u)}{(x-u)^{1+\alpha}}du\bigg),\\
\big(D_{b-}^{1-\alpha}g\big)(x)=\frac{e^{-i\pi
\alpha}}{\Gamma(\alpha)}\bigg(\frac{g(x)}{(b-x)^{1-\alpha}}+(1-\alpha)
\int_{x}^b\frac{g(x)-g(u)}{(u-x)^{2-\alpha}}du\bigg).
\end{gather*}
Assuming that $D_{a+}^{\alpha}f\in L_1[a,b], \ D_{b-}^{1-\alpha}g_{b-}\in
L_\infty[a,b]$, where $g_{b-}(x) = g(x) - g(b)$, the generalized (fractional) Lebesgue-Stieltjes integral $\int_a^bf(x)dg(x)$ is defined as
\begin{equation}\label{integr}\int_a^bf(x)dg(x)=e^{i\pi\alpha}\int_a^b\big(D_{a+}^{\alpha}f\big)(x)\big(D_{b-}^{1-\alpha}g_{b-}\big)(x)dx.
\end{equation}

It follows from H\"older continuity of $B^H$ that $D_{b-}^{1-\al}B_{b-}^H(x)\in L_\infty[a,b]$ a.s. Then for a function
$f$ with $D_{a+}^{\al}f\in L_1[a,b]$ we can define integral with respect to $B^H$ through  \eqref{integr}:
\begin{equation}\label{integrfbm}
\int_a^b f(x)\,dB^H(x):=e^{i\pi\alpha}\int_a^b (D_{a+}^\al f)(x)(D_{b-}^{1-\al} B_{b-}^H)(x)\,dx.
\end{equation}

Note that in the case where $f$ is piecewise H\"older continuous with exponent $\gamma>1-H$, this integral is just a
limit of forward integral sums $ \sum_{k=1}^n f(t_k)\big(B^H_{t_{k+1}}-B^H_{t_k}\big)$.
(This fact is proved in \cite{Zah98a} for H\"older continuous functions $f$, but it is easily checked
that the proof works for piecewise H\"older continuous functions as well.) Hence, for such functions, all
usual properties of integral hold: linearity, additivity etc.

Throughout the paper, the symbol $C$ will denote a generic constant, whose
value is not significant and can change from one line to another. To emphasize its dependence on some parameters,
we will put them into subscripts.

\section{Existence and uniqueness of solution}\label{sec:exuniq}
In this and the following sections we impose the following assumptions on the coefficients
of \eqref{sde-jumps}:
\begin{enumerate}[H1.]
\item
The function $c$ is differentiable in $x$ and for all $x,y\in\R$, $t\in[0,T]$
\begin{gather*}
|a(t,x)|+|b(t,x)|+|c(t,x)|\leq C(1+|x|),\\
|\partial_x c(t,x)|\leq C.
\end{gather*}
\item The functions $a$, $b$ and $\partial_x c$ are Lipschitz continuous in $x$:
$$|a(t,x_1)-a(t,x_2)|+|b(t,x_1)-b(t,x_2)|+|\partial_x c(t,x_1)-\partial_x c(t,x_2)|\leq C|x_1-x_2|,$$
for all $x_1, x_2\in{\R}$, $t\in[0,T]$.
\item The functions $a$, $b$ and $\partial_x c$ are H\"older continuous in $t$: for some  $\beta\in(1-H,1)$ and for all $s,t\in[0,T]$, $x\in\R$
$$|a(s,x)-a(t,x)|+|b(s,x)-b(t,x)|+|c(s,x)-c(t,x)|+|\partial_x c(s,x)-\partial_x c(t,x)|\leq C|s-t|^\beta.$$
\end{enumerate}
We do not impose any assumptions on the function $q$ except joint measurability in all arguments.

We will say that a process $X$ is a solution to \eqref{sde-jumps} if it is cadlag and has the following properties:
\begin{itemize}
\item for some $\kappa>1-H$
$$
\abs{X_t-X_s}\le C\abs{t-x}^\kappa
$$
for all $t,s$ such that $\nu([t-s],\R) = 0$ (i.e.\ $X$ is H\"older continuous between jumps of the process $L$ defined in \eqref{Lt});
\item equation \eqref{sde-jumps} holds almost surely for all $t\in[0,T]$.
\end{itemize}
From \eqref{pifinite} it follows that, almost surely,  $L$ has finitely many jumps on $[0,T]$,
so a solution $X$ is piecewise H\"older continuous of order $\kappa>1-H$, consequently, the integral $\int_0^t c(s,X_s) dB_s^H$
is well defined for all $t\in[0,T]$. It also follows that $\int_0^T (\abs{a(s,X_s)} ds + b(s,X_s)^2) ds<\infty$ a.s., thus the  integrals $\int_0^t a(s,X_s) ds$ and $ \int_0^t b(s,X_s) dW_s$ are well defined too.

\begin{theorem}\label{thm_ex_sol}
Equation \eqref{sde-jumps} has a unique solution.
\end{theorem}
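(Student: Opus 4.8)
The plan is to exploit the finite activity of the jumps: since $\Pi(\R)<\infty$, the driving process $L$ has, almost surely, only finitely many jumps on $[0,T]$, and between consecutive jumps equation \eqref{sde-jumps} contains no jump term and thus reduces to the jump-free mixed equation \eqref{sde-mixed}. For the latter, global existence and uniqueness under hypotheses of the type H1--H3 is already available (see \cite{guernual,mbfbm-sde,mbfbm-limit}). I would therefore construct the solution by \emph{interlacing}: solve the mixed equation on each inter-jump interval and insert the jump $q(\tau,X_{\tau-},\Delta L_\tau)$ at each jump time $\tau$. This route is essentially forced by the hypotheses, since $q$ is assumed only jointly measurable; with no Lipschitz or continuity control on $q$ one cannot run a contraction argument that includes the jump term, whereas in the interlacing construction $q$ enters only as a single additive update of the left limit, where mere measurability is harmless.

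Concretely, let $0<\tau_1<\dots<\tau_N\le T$ denote the finitely many (random) jump times of $L$. On $[0,\tau_1)$ I would invoke the known result for \eqref{sde-mixed} with initial value $X_0$ to obtain a unique solution; since the three integrals appearing there are continuous, the left limit $X_{\tau_1-}$ exists, and I set $X_{\tau_1}=X_{\tau_1-}+q(\tau_1,X_{\tau_1-},\Delta L_{\tau_1})$. Restarting on $[\tau_1,\tau_2)$ from the value $X_{\tau_1}$ and repeating, after $N$ steps I obtain a process $X$ on all of $[0,T]$. By construction $X$ is cadlag and, being a solution of \eqref{sde-mixed} on each inter-jump interval, it is H\"older continuous there of any order $\kappa<1/2$; since $H>1/2$ gives $1-H<1/2$, one may choose $\kappa\in(1-H,1/2)$, so the required H\"older-between-jumps property holds and the integral $\int_0^t c(s,X_s)\,dB_s^H$ is well defined, as noted before the theorem. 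Summing the pieces, using additivity of the integrals across the gluing points, then yields \eqref{sde-jumps}.

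The step I expect to require the most care is justifying the application of the mixed-equation existence--uniqueness result on each sub-interval $[\tau_i,\tau_{i+1})$, because there the initial datum $X_{\tau_i}$ is random and only $\mathcal F_{\tau_i}$-measurable, and the interval endpoints are themselves random stopping times, whereas the cited results are stated on a fixed interval with a prescribed starting point. I would handle this by noting that the construction of the solution to \eqref{sde-mixed} is pathwise in $B^H$ (the fBm integral is the generalized Lebesgue--Stieltjes integral \eqref{integrfbm}, defined path by path) and uses It\^o calculus only through the Wiener part, so the fixed-point argument for the mixed equation carries over verbatim on a random interval with an $\mathcal F_{\tau_i}$-measurable initial condition; adaptedness and measurability of the glued process are then routine, since the $\tau_i$ are stopping times.

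Finally, uniqueness follows piece by piece: any two solutions must coincide on $[0,\tau_1)$ by uniqueness for \eqref{sde-mixed}, hence share the same left limit and the same deterministically prescribed jump at $\tau_1$, hence coincide on $[\tau_1,\tau_2)$, and so on up to $\tau_N$ and then up to $T$.
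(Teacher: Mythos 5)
Your proposal is correct and follows essentially the same route as the paper: an interlacing construction that solves the jump-free mixed equation \eqref{sde-mixed} on each inter-jump interval (restarting the drivers at each stopping time $\tau_n$, which the paper justifies via the pathwise nature of the fBm integral and the strong Markov property of $W$), inserts the jump $q(\tau_n,X_{\tau_n-},\Delta L_{\tau_n})$ at each jump time, and deduces uniqueness interval by interval. No substantive differences.
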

\begin{proof}
For $u\ge 0$, consider the following equation:
$$
Y_t = y + \int_0^t a(s+u,Y_s)ds + \int_0^t b(s+u,Y_s)dV_s + \int_0^t c(s+u,Y_s) dZ_s,
$$
where $V$ is a Wiener process, $Z$ is a process with almost surely H\"older continuous paths of order $\gamma>1/2$.
It was proved in \cite{mbfbm-limit} that such equation has a unique solution in the class of H\"older continuous processes
of order $\kappa>1-\gamma$. We denote this solution by $Y(u,t,y,V,Z)$.

Let $\tau_n$ be the moment of the $n$th jump of process $L$. Define a sequence of processes $X^n$ recursively as follows.
Let the process $X^0_t = Y(0,t,X_0,W,B^H)$ be the solution to \eqref{sde-mixed}. If for $n\ge 1$ the process $X^{n-1}$ is constructed, set
$W^n_s = W_{\tau_n+s} - W_{\tau_n}$, $Z^n_s = B^H_{\tau_n+s} -B^H_{\tau_n}$,
 $s\ge 0$, $X^n_{\tau_n} = X^{n-1}_{\tau_n} + q(\tau_n,X^{n-1}_{\tau_n},\Delta L_{\tau_n})$. On the stochastic basis $(\Omega, \mathcal F^n, (\mathcal F^n_t, t\ge 0), P)$ with $\mathcal F^n = \sigma\{W^n,Z^n\}$,
$\mathcal F^n = \sigma\{W^n_s,Z^n_s,s\in[0,t]\}$, the process $W^n$ is a Wiener process, and  $Z^n_s$ is H\"older continuous
of any order $\gamma<H$, hence we can define
$X^n_t = Y(\tau_n,t-\tau_n, X^n_{\tau_n}, W^n, Z^n)$, $t\ge\tau_n$.

Now we put
\begin{equation*}
X_t= \sum_{n\ge 0} X^n_t 1_{[\tau_n,\tau_{n+1})}(t)
\end{equation*}
and show that this process solves \eqref{sde-jumps}. Indeed, for $t\in[\tau_n,\tau_{n+1})$, $n\ge 0$, we have
\begin{gather*}
X_t = X_{\tau_n} + \int_0^{t-\tau_n} \big(a(s+\tau_n,X_{s+\tau_n}) ds + b(s+\tau_n,X_{s+\tau_n}) dW^n_s + c(s+\tau_n,X_{s+\tau_n}) dZ^n_s\big) \\
= X_{\tau_n} + \int_{\tau_n}^{t} \left(a(s,X_{s}) ds + b(s,X_{s}) dW^n_{s+\tau_n} + c(s,X_{s}) dZ^n_{s+\tau_n}\right)\\
= X_{\tau_n} + \int_{\tau_n}^{t} \left(a(s,X_{s}) ds + b(s,X_{s}) dW_{s} + c(s,X_{s}) dB^H_s\right).
\end{gather*}
Thus, for any $t\ge 0$ we can write
\begin{gather*}
X_t = X_0 + \int_{\tau_n}^{t} \left(a(s,X_{s}) ds + b(s,X_{s}) dW_{s} + c(s,X_{s}) dB^H_s\right) + \sum_{n:\tau_n\le t} \Delta X_{\tau_n} \\
= X_0 + \int_{\tau_n}^{t} \left(a(s,X_{s}) ds + b(s,X_{s}) dW_{s} + c(s,X_{s}) dB^H_s\right) + \sum_{n:\tau_n\le t} q(\tau_n,X_{\tau_n-}, \Delta L_{\tau_n} ) \\
= X_0 + \int_{\tau_n}^{t} \left(a(s,X_{s}) ds + b(s,X_{s}) dW_{s} + c(s,X_{s}) dB^H_s\right) \\+ \int_0^t \int_{\R}
q(s,X_{s-},y)\nu(ds,dy),
\end{gather*}
i.e.\ $X$ solves \eqref{sde-jumps}.

Uniqueness follows from a similar reasoning: from uniqueness of solution for \eqref{sde-mixed}
and the strong Markov property of $W$ we get, that for $t\in[\tau_n,\tau_{n+1})$, $n\ge 0$, the solution of \eqref{sde-jumps} satisfies $X_t = Y(\tau_n,t-\tau_n, X^n_{\tau_n}, W^n, Z^n)$. On the other hand,
$\Delta X_{\tau_n} = q(\tau_n,X_{\tau_n-},\Delta L_{\tau_n})$, hence any solution of \eqref{sde-jumps}
coincides with the one constructed above.
\end{proof}

\section{Existence of moments of the solution}\label{sec:exmom}

\subsection{Existence of moments for equation without jumps}

We start by making  pathwise estimates of the solution to equation \eqref{sde-mixed} without jumps. We fix some
$\alpha\in(1-H,1/2)$ and introduce
the following notation:
\begin{gather*}
\norm{f}_{t} = \int_0^t \abs{f(t)-f(s)}(t-s)^{-1-\alpha} ds,\\
\norm{f}_{\lambda,t} =\sup_{s\le t} e^{-\lambda s} \abs{f(s)},\quad
\norm{f}_{1,\lambda,t} =\sup_{s\le t} e^{-\lambda s} \norm{f}_s,\quad
\norm{f}_{\infty;t} = \norm{f}_{0,t}+\norm{f}_{1,0,t};\\
\norm{f}_{0;[s,t]} = \sup_{s\le u<v<t} \left(\frac{\abs{f(v)-f(u)}}{(v-u)^{1-\alpha}} + \int_u^v \frac{\abs{f(u)-f(z)}}{(z-u)^{2-\alpha}}dz\right).
\end{gather*}
Observe that it follows from \eqref{integrfbm} that
\begin{equation}\label{integrestimate}
\begin{gathered}
\abs{\int_a^b f(s)dB^H_s} \le C \norm{B^H}_{0;[a,b]}\\\times \int_a^b \left(\abs{f(s)}(s-a)^{-\alpha} + \int_a^s \abs{f(s)-f(u)}(s-u)^{-1-\alpha}du \right)ds.
\end{gathered}
\end{equation}

First we establish some pathwise estimates of the solution of \eqref{sde-mixed}.
\begin{lemma}\label{pathwiseestimatemixed}
For the solution $X$ of \eqref{sde-mixed}, the following estimate holds:
\begin{equation*}
\norm{X}_{\infty;T} \le C \exp\set{C\norm{B^H}_{0;[0,T]}^{1/(1-\alpha)}}\left(1+\norm{I_b}_{\infty;T}\right),
\end{equation*}
where $I_b(t) = \int_0^t b(s,X_s)dW_s$, $t\in[0,T]$.
\end{lemma}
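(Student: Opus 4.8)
The plan is to follow the pathwise Gronwall strategy of Nualart and R\u{a}\c{s}canu, treating the It\^o integral $I_b$ as a fixed function of time along each trajectory. First I would fix $\omega$ and rewrite the equation as
$$
X_t = X_0 + \int_0^t a(s,X_s)\,ds + I_b(t) + \int_0^t c(s,X_s)\,dB^H_s,
$$
so that, pathwise, $X$ solves a fractional equation whose ``drift part'' now contains the continuous function $I_b$. The only genuinely delicate term is the fBm integral, which I would estimate by \eqref{integrestimate}: writing $\Lambda = \norm{B^H}_{0;[0,T]}$ and using H1 (linear growth of $c$ and boundedness of $\partial_x c$) together with H3 (H\"older continuity of $c$ in $t$), I would bound, for $0\le s<t$,
$$
\abs{\int_s^t c(r,X_r)\,dB^H_r}\le C\Lambda\int_s^t\Big((1+\abs{X_r})(r-s)^{-\alpha} + \int_s^r (\abs{X_r-X_u}+\abs{r-u}^{\beta})(r-u)^{-1-\alpha}\,du\Big)dr.
$$
Here the Lipschitz-in-$x$ bound $\abs{c(r,X_r)-c(r,X_u)}\le C\abs{X_r-X_u}$ follows from $\abs{\partial_x c}\le C$, while the temporal term $\abs{r-u}^{\beta}$ is integrable against $(r-u)^{-1-\alpha}$ since $\beta>\alpha$ (which we may assume by the choice of $\alpha$).

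Next I would assemble the two pieces of $\norm{\cdot}_{\infty;t}$. Estimating $\abs{X_t}$ (feeding $\norm{X}_{0,t}$) and $\norm{X}_t=\int_0^t\abs{X_t-X_s}(t-s)^{-1-\alpha}ds$ (feeding $\norm{X}_{1,0,t}$) separately, the drift contributes terms of the form $C\int_0^t(1+\abs{X_r})\,dr$, the Wiener part contributes $\norm{I_b}_{\infty;t}$, and the fBm part contributes the $\Lambda$-weighted expressions above. Collecting everything gives a coupled system of integral inequalities for $\norm{X}_{\lambda,t}$ and $\norm{X}_{1,\lambda,t}$, where the exponential weight $e^{-\lambda s}$ is inserted precisely so that the singular kernels become small: $\int_0^t e^{-\lambda(t-r)}(t-r)^{-\alpha}dr\le \Gamma(1-\alpha)\lambda^{\alpha-1}$, and a Fubini rearrangement gives the analogous bound for the double integral arising from the seminorm. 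Thus every occurrence of $\Lambda$ appears multiplied by a negative power of $\lambda$, and the $X$-dependent part of the right-hand side carries a factor $C\Lambda\lambda^{\alpha-1}$.

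The decisive step is the choice of $\lambda$. Taking $\lambda=(C\Lambda)^{1/(1-\alpha)}$ makes the coefficient $C\Lambda\lambda^{\alpha-1}$ strictly less than $1$, so the $X$-terms on the right can be absorbed into the left, yielding
$$
\norm{X}_{\lambda,T}+\norm{X}_{1,\lambda,T}\le C\big(1+\norm{I_b}_{\infty;T}\big).
$$
Converting from the $\lambda$-weighted norms back to the $\lambda=0$ norms costs a factor $e^{\lambda T}$, since $\norm{f}_{0,T}\le e^{\lambda T}\norm{f}_{\lambda,T}$ and likewise for the seminorm; with $\lambda=(C\Lambda)^{1/(1-\alpha)}$ this produces exactly $\exp\set{C\norm{B^H}_{0;[0,T]}^{1/(1-\alpha)}}$, giving the claim. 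I expect the main obstacle to be the bookkeeping needed to close the coupled system — in particular verifying that the double-integral term from the fractional seminorm also acquires the factor $\lambda^{\alpha-1}$ after inserting the weight, and pinning down that the correct balance is $\lambda\sim\Lambda^{1/(1-\alpha)}$, which is precisely what generates the exponent $1/(1-\alpha)$ in the statement.
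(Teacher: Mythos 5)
Your overall framework (pathwise estimate, the bound \eqref{integrestimate} for the fBm integral, exponentially weighted norms, and the final conversion costing $e^{\lambda T}$) matches the paper's. But there is a genuine gap at the decisive absorption step, and it is precisely the point on which the paper's improvement over Nualart--R\u{a}\c{s}canu rests. When you estimate the seminorm $\norm{X}_t=\int_0^t\abs{X_t-X_s}(t-s)^{-1-\alpha}ds$, the term $J_c'$ coming from $\int_s^t\abs{c(u,X_u)}(u-s)^{-\alpha}du$ produces, after Fubini and the bound $\int_0^u(u-s)^{-\alpha}(t-s)^{-1-\alpha}ds\le C(t-u)^{-2\alpha}$, a contribution $C\Lambda\int_0^t\abs{X_u}(t-u)^{-2\alpha}du$. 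After inserting the weight $e^{-\lambda s}$ this becomes $C\Lambda\lambda^{2\alpha-1}\norm{X}_{\lambda,t}$, \emph{not} $C\Lambda\lambda^{\alpha-1}\norm{X}_{\lambda,t}$. With your choice $\lambda=(C\Lambda)^{1/(1-\alpha)}$ that coefficient equals $C\Lambda^{\alpha/(1-\alpha)}$ up to constants, which is unbounded in $\Lambda$; so the claim that ``the $X$-dependent part of the right-hand side carries a factor $C\Lambda\lambda^{\alpha-1}$'' fails, and you cannot absorb the sum $\norm{X}_{\lambda,T}+\norm{X}_{1,\lambda,T}$ directly. Forcing all coefficients below $1$ in one shot requires $\lambda\sim\Lambda^{1/(1-2\alpha)}$, which yields only the exponent $1/(1-2\alpha)$ of \cite{Nual} --- and that exponent would later restrict the moment theorem to $\alpha<1/4$, i.e.\ $H>3/4$, defeating the purpose of the lemma.

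The paper closes this gap with what it calls a two-dimensional Gronwall argument: keep the two inequalities
\begin{gather*}
\norm{X}_{\lambda,t}\le K\Lambda\left(1+\lambda^{\alpha-1}\norm{X}_{\lambda,t}+\lambda^{-1}\norm{X}_{1,\lambda,t}\right)+J_b,\\
\norm{X}_{1,\lambda,t}\le K\Lambda\left(1+\lambda^{2\alpha-1}\norm{X}_{\lambda,t}+\lambda^{\alpha-1}\norm{X}_{1,\lambda,t}\right)+J_b
\end{gather*}
separate, set $\lambda=(4K\Lambda)^{1/(1-\alpha)}$ so that only the \emph{diagonal} coefficients $K\Lambda\lambda^{\alpha-1}$ are small, solve the first inequality for $\norm{X}_{\lambda,t}$ in terms of $\norm{X}_{1,\lambda,t}$, and substitute into the second. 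The troublesome cross-term then appears with coefficient $K\Lambda\lambda^{2\alpha-1}\cdot K\Lambda\lambda^{-1}=K^2\Lambda^2\lambda^{2\alpha-2}$, which \emph{is} bounded for this $\lambda$, so the absorption closes and the exponent $1/(1-\alpha)$ survives. You should also note that the resulting bounds carry a harmless polynomial factor $\Lambda^{1/(1-\alpha)}$ rather than a pure constant; it is swallowed by the exponential at the end. Everything else in your outline (the treatment of the drift, the Lipschitz bound for $c$ via $\abs{\partial_x c}\le C$, the requirement $\beta>\alpha$, and the final $e^{\lambda T}$ conversion) is consistent with the paper.
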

\begin{remark}
A similar estimate (naturally, without $J_b$) was obtained for the pure fractional equation \eqref{sde-pure} in \cite{Nual}, but with
exponent $1/(1-2\alpha)$ instead of $1/(1-\alpha)$ here. In our proof we will use methods similar to those of \cite{Nual}, but we modify them
as follows. While in \cite{Nual}, the sum $\norm{f}_{\lambda,t} + \norm{f}_{1,\lambda,t}$ is estimated and  a version
of the Gronwall lemma is used, here we will estimate these terms separately and then use a kind of two-dimensional Gronwall lemma.
\end{remark}

\begin{proof}
For shortness, denote $\Lambda=\norm{B^H}_{0;[0,T]}\vee 1$, $J_b = \norm{I_b}_{\infty;T}$, $h(t,s) = (t-s)^{-1-\alpha}$.

We start by estimating $\abs{X_t}$:
\begin{gather*}
\abs{X_t}\le \abs{X_0} + \abs{I_a(t)} + \abs{I_b(t)} + \abs{I_c(t)},
\end{gather*}
where $I_b$ is as above, $I_a(t) = \int_0^t a(s,X_s) ds$, $I_c(t) = \int_0^t c(s,X_s) dB^H_s$. Estimate $\abs{I_b(t)}\le J_b$,
\begin{gather*}
\abs{I_a(t)} \le \int_0^t \abs{a(s,X_s)}ds \le C\int_0^t \left(1+\abs{X_s}\right)ds\le C\left(1+\int_0^t \abs{X_s}ds\right).
\end{gather*}
By \eqref{integrestimate},
\begin{gather*}
\abs{I_c(t)} \le C\Lambda\int_0^t\left(\abs{c(s,X_s)}s^{-\alpha} + \int_0^s \abs{c(s,X_s) - c(u,X_u)} h(s,u)du\right)ds\\
\le C\Lambda \int_0^t\left(\left(1+\abs{X_s}\right)s^{-\alpha} + \int_0^s \left(\abs{s-u}^{\beta} + \abs{X_s - X_u}\right) h(s,u)du\right)ds\\
\le C\Lambda\left(1+ \int_0^t\left(\abs{X_s}s^{-\alpha} + \norm{X}_s\right)ds \right).
\end{gather*}
Summing up, we have
\begin{gather*}
\abs{X_t}\le C\Lambda\left(1+ \int_0^t\left(\abs{X_s}s^{-\alpha} + \norm{X}_s\right)ds \right) + J_b,
\end{gather*}
whence
\begin{gather*}
\norm{X}_{\lambda,t}\le C\Lambda\left(1+ \sup_{s\le t}e^{-\lambda s}\int_0^s\left(\abs{X_u}u^{-\alpha} + \norm{X}_u\right)du \right) + J_b\\
\le C\Lambda\left(1+ \sup_{s\le t}\int_0^se^{\lambda(u-s)}\left(e^{-\lambda u}\abs{X_u}u^{-\alpha} + e^{-\lambda u}\norm{X}_u\right)du \right) + J_b\\
\le C\Lambda\left(1+ \sup_{s\le t}\int_0^s e^{\lambda(u-s)}\left(u^{-\alpha} \norm{X}_{\lambda,t}  + \norm{X}_{1,\lambda,t}\right)du \right) + J_b\\\le
C\Lambda\left(1+ \lambda^{\alpha-1}\norm{X}_{\lambda,t}  + \lambda^{-1}\norm{X}_{1,\lambda,t}\right) + J_b,
\end{gather*}
where we have used the estimate
\begin{gather*}
\sup_{s\le t}\int_0^s e^{\lambda(u-s)}u^{-\alpha} du =\sup_{s\le t}\lambda^{-1}\int_0^{\lambda s} e^{-z}( s - z/\lambda)^{-\alpha} dz\\= \sup_{s\le t}\lambda^{\alpha-1}\int_0^{\lambda s} e^{-z}(\lambda s - z)^{-\alpha} dz\le  \lambda^{\alpha-1}\sup_{a>0}\int_0^{a} e^{-z}(a - z)^{-\alpha} dz =C\lambda^{\alpha-1}.
\end{gather*}
Further, we estimate $\norm{X}_t$:
\begin{gather*}
\norm{X}_t\le \norm{I_a}_t + \norm{I_b}_t + \norm{I_c}_t\le \norm{I_a}_t + J_b + \norm{I_c}_t,\\
\norm{I_a}_t \le \int_0^t \int_s^t\abs{a(u,X_u)}du\, h(t,s) ds \le C\int_0^t \int_s^t\left(1+\abs{X_u}\right)du\, h(t,s) ds\\
\le C\left(1+ \int_0^t \abs{X_u} (t-u)^{-\alpha}du\right),\\
\norm{I_c}_t = \int_0^t \abs{\int_s^t c(u,X_u) dB_u^H}h(t,s) ds
\le C\Lambda(J'_c+J''_c),
\end{gather*}
where
\begin{gather*}
J'_c = \int_0^t \int_s^t\abs{c(u,X_u)}(u-s)^{-\alpha}du\,h(t,s)ds\le  \int_0^t \int_s^t\left(1+\abs{X_u}\right)(u-s)^{-\alpha}du\, h(t,s)ds\\
\le C\left(1+\int_0^t\abs{X_u} \int_0^u(u-s)^{-\alpha}(t-s)^{-1-\alpha}ds \right)\le C\int_0^t \abs{X_u} (t-u)^{-2\alpha} du,
\\
J''_c = \int_0^t \int_s^t \int_s^u \abs{c(u,X_u)-c(z,X_z)}h(u,z)dz\,du\, h(t,s) ds \\
\le C\int_0^t \int_s^t \int_s^u \left((u-z)^{\beta} + \abs{X_u-X_z}h(u,z)\right)dz\,du\, h(t,s) ds\\
\le C\left(\int_0^t \int_s^t (u-s)^{\beta-\alpha} + \int_0^t \int_0^u \abs{X_u-X_z}h(u,z) (t-z)^{-\alpha} dz\,du\right)\\
\le C\left(1 + \int_0^t \int_0^u \abs{X_u-X_z}h(u,z)  dz(t-u)^{-\alpha} du\right) = C\left(1 + \int_0^t\norm{X}_u(t-u)^{-\alpha} du\right).
\end{gather*}
Here to estimate $J_c'$ we used the following computation:
\begin{gather*}
\int_0^u (u-s)^{-\alpha} (t-s)^{-1-\alpha} du  = \Big|s = u - (t-u)v \Big| = (t-u)^{-2\alpha} \int_0^{\frac{u}{t-u}} v^{-\alpha} (1+v)^{-1-\alpha} dt \\\le
(t-u)^{-2\alpha} \int_0^{\infty} v^{-\alpha} (1+v)^{-1-\alpha} dv = \mathrm{B}(1-\alpha,2\alpha) (t-u)^{-2\alpha}.
\end{gather*}
Combining all estimates, we get
\begin{gather*}
\norm{X}_t\le C\Lambda\left(1 + \int_0^t \left(\abs{X_u} (t-u)^{-2\alpha}+ \norm{X}_u(t-u)^{-\alpha} \right)du \right) + J_b,
\end{gather*}
whence
\begin{gather*}
\norm{X}_{1,\lambda,t}\le C\Lambda\left(1+ \sup_{s\le t}e^{-\lambda s}\int_0^s\left(\abs{X_u}(s-u)^{-2\alpha} + \norm{X}_u(s-u)^{-\alpha}\right)du \right) + J_b\\
\le C\Lambda\left(1+ \sup_{s\le t}\int_0^s e^{\lambda(u-s)}\left(e^{-\lambda u}\abs{X_u}(s-u)^{-2\alpha} + e^{-\lambda u}\norm{X}_u(s-u)^{-\alpha}\right)du \right) + J_b\\
\le C\Lambda\left(1+ \sup_{s\le t}\int_0^s e^{\lambda(u-s)}\left( \norm{X}_{\lambda,t}(s-u)^{-2\alpha}  + \norm{X}_{1,\lambda,t}(s-u)^{-\alpha}\right)du \right) + J_b\\\le
C\Lambda\left(1+ \lambda^{2\alpha-1}\norm{X}_{\lambda,t}  + \lambda^{\alpha-1}\norm{X}_{1,\lambda,t}\right) + J_b.
\end{gather*}
Thus, we get the following system of inequalities :
\begin{gather*}
\norm{X}_{\lambda,t}\le K\Lambda\left(1+ \lambda^{\alpha-1}\norm{X}_{\lambda,t}  + \lambda^{-1}\norm{X}_{1,\lambda,s}\right) + J_b,\\
\norm{X}_{1,\lambda,t}\le K\Lambda\left(1+ \lambda^{2\alpha-1}\norm{X}_{\lambda,t}  + \lambda^{\alpha-1}\norm{X}_{1,\lambda,s}\right) + J_b
\end{gather*}
with some constant $K$ (which can be assumed to be greater than 1 without loss of generality). Putting $\lambda = (4K \Lambda)^{1/(1-\alpha)}$, we get from the first inequality that
\begin{equation}\label{xlambdat}
 \norm{X}_{\lambda,t} \le \frac43 K\Lambda\left(1  + \lambda^{-1}\norm{X}_{1,\lambda,t}\right) + \frac43J_b.
\end{equation}
We remark that $\norm{X}_{\lambda,t}$ and $\norm{X}_{1,\lambda,t}$ are almost surely finite by the results of \cite{mbfbm-sde}.

Plugging this to the second inequality and making  simple transformations, we arrive at
\begin{gather*}
 \norm{X}_{1,\lambda,t} \le \frac32 K\Lambda + 2 K\Lambda^{1/(1-\alpha)} + 2K\Lambda J_b \le C \Lambda^{1/(1-\alpha)}(1 + J_b)
\end{gather*}
with some constant, which is no longer of interest.
Substituting this to \eqref{xlambdat}, we get
$$
 \norm{X}_{\lambda,t} \le C\Lambda (1+J_b)\le C\Lambda^{1/(1-\alpha)} (1+J_b).
$$
Finally,
\begin{gather*}
\norm{X}_{\infty;T} \le e^{\lambda T} \left( \norm{X}_{\lambda,t} + \norm{X}_{1,\lambda,t}\right) \le C\exp\set{C\Lambda^{1/(1-\alpha)}} \Lambda^{1/(1-\alpha)}(1+J_b)\\
\le C\exp\set{C\Lambda^{1/(1-\alpha)}}(1+J_b)\le  C\exp\set{C\left(1+\norm{B^H}_{0;[0,T]}^{1/(1-\alpha)}\right)}(1+J_b)\\\le C \exp\set{C\norm{B^H}_{0;[0,T]}^{1/(1-\alpha)}}(1+J_b),
\end{gather*}
as required.
\end{proof}

Now we are ready to state the result about finiteness of moments. To this end, in addition to our main hypotheses H1--H3, we will assume that the coefficient $b$ is bounded:
\begin{enumerate}[H1.]
\addtocounter{enumi}{3}
\item for all $x\in\R$, $t\in[0,T]$
$$
\abs{b(t,x)}\le C.
$$
\end{enumerate}
\begin{theorem}\label{moments-mixed}
The solution $X$ of \eqref{sde-mixed} for each $p > 0$ satisfies
\begin{equation*}
\ex{\norm{X}_{\infty;T}^p}<\infty,
\end{equation*}
in particular,
\begin{equation*}
\ex{\sup_{t\in[0,T]}\abs{X_t}^p}<\infty.
\end{equation*}
\end{theorem}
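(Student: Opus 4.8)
The plan is to feed the pathwise bound of Lemma~\ref{pathwiseestimatemixed} into an expectation and to separate the two sources of randomness it contains: the fractional seminorm of $B^H$ inside the exponential, and the It\^o integral $I_b$. Writing $\Lambda=\norm{B^H}_{0;[0,T]}$ and $\theta:=1/(1-\al)$, Lemma~\ref{pathwiseestimatemixed} gives $\norm{X}_{\infty;T}\le C\exp\set{C\Lambda^{\theta}}(1+\norm{I_b}_{\infty;T})$, hence for each fixed $p>0$ the random variable $\norm{X}_{\infty;T}^p$ is dominated by $C^p\exp\set{Cp\,\Lambda^{\theta}}(1+\norm{I_b}_{\infty;T})^p$. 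Applying the Cauchy--Schwarz inequality I would split
\[
\ex{\norm{X}_{\infty;T}^p}\le C^p\Big(\ex{\exp\set{2Cp\,\Lambda^{\theta}}}\Big)^{1/2}\Big(\ex{(1+\norm{I_b}_{\infty;T})^{2p}}\Big)^{1/2},
\]
so that the theorem reduces to showing that each of the two factors on the right is finite.

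For the first factor I would argue that $\Lambda=\norm{B^H}_{0;[0,T]}$ is a measurable seminorm of the Gaussian process $B^H$ (it is a supremum over $u<v$ of expressions that are seminorms in the path), and that it is finite almost surely: since $\al>1-H$ we have $1-\al<H$, so $B^H$ is H\"older of an order exceeding $1-\al$ and $\Lambda<\infty$ a.s. By Fernique's theorem, equivalently the Gaussian concentration (Borell--TIS) inequality, such a seminorm has Gaussian tails, $\pr(\Lambda>r)\le C\exp\set{-cr^{2}}$ for large $r$. The decisive point is that $\al<1/2$ forces $\theta=1/(1-\al)<2$, so $\exp\set{2Cp\,r^{\theta}-cr^{2}}$ stays bounded and decays at infinity; integrating the tail then yields $\ex{\exp\set{2Cp\,\Lambda^{\theta}}}<\infty$ for every $p$. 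This is exactly where the sharpened exponent $1/(1-\al)$ of Lemma~\ref{pathwiseestimatemixed}, as opposed to $1/(1-2\al)$ available for the pure fractional equation, is used: it keeps the exponent strictly below $2$ for all $H>1/2$.

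For the second factor I would invoke the boundedness of $b$ from hypothesis~H4. Fix $\gamma\in(\al,1/2)$ and write $[I_b]_\gamma=\sup_{0\le r<s\le T}\abs{I_b(s)-I_b(r)}(s-r)^{-\gamma}$ for the H\"older seminorm. Since $\gamma>\al$, a direct estimate gives $\norm{I_b}_s=\int_0^s\abs{I_b(s)-I_b(r)}(s-r)^{-1-\al}\,dr\le [I_b]_\gamma\int_0^s(s-r)^{\gamma-1-\al}\,dr\le C_T[I_b]_\gamma$, and similarly $\sup_{s\le T}\abs{I_b(s)}\le T^{\gamma}[I_b]_\gamma$, so that $\norm{I_b}_{\infty;T}\le C_T[I_b]_\gamma$. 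Because $I_b(s)-I_b(r)=\int_r^s b(u,X_u)\,dW_u$ with $\abs{b}\le C$, the Burkholder--Davis--Gundy inequality yields $\ex{\abs{I_b(s)-I_b(r)}^q}\le C_q\,\ex{\big(\int_r^s b(u,X_u)^2\,du\big)^{q/2}}\le C_q(s-r)^{q/2}$ for every $q$. Choosing $q$ large enough that $\gamma<1/2-1/q$ and applying the Garsia--Rodemich--Rumsey (Kolmogorov) bound gives $\ex{[I_b]_\gamma^{q}}<\infty$ for arbitrarily large $q$, and in particular $\ex{(1+\norm{I_b}_{\infty;T})^{2p}}<\infty$. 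Combining the two finite factors completes the argument.

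The step I expect to be the main obstacle is the exponential integrability of $\Lambda^{\theta}$. Everything hinges on the competition between the order-$2$ Gaussian tail of the seminorm and the sub-quadratic exponent $\theta=1/(1-\al)<2$, and some care is needed to verify that $\norm{\cdot}_{0;[0,T]}$ is genuinely a measurable, almost surely finite seminorm of $B^H$ so that the Gaussian concentration estimate legitimately applies. By contrast, the martingale contribution is routine once $b$ is assumed bounded, reducing to standard moment bounds for an It\^o integral.
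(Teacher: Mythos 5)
Your proposal is correct and follows essentially the same route as the paper: feed the bound of Lemma~\ref{pathwiseestimatemixed} into the expectation, separate the two factors by H\"older/Cauchy--Schwarz, handle $\exp\{C\norm{B^H}_{0;[0,T]}^{1/(1-\al)}\}$ by Fernique-type Gaussian integrability using $1/(1-\al)<2$, and control the It\^o-integral factor via the boundedness of $b$, Burkholder--Davis--Gundy and the Garsia--Rodemich--Rumsey inequality (your H\"older exponent $\gamma\in(\al,1/2)$ is exactly the paper's $1/2-\eta$ with $\eta\in(0,1/2-\al)$). Your explicit remark that $\norm{\cdot}_{0;[0,T]}$ must be checked to be a measurable a.s.\ finite seminorm of the Gaussian process is a point the paper treats more casually, but it does not change the argument.
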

\begin{proof}
Thanks to Lemma~\ref{pathwiseestimatemixed}, it is enough to prove that all moments  of $\exp\set{\norm{B^H}^{1/(1-\alpha)}_{0;[0,T]}}$ and of $J_b$ are finite.  The
first follows from the fact that $\norm{B^H}_{0;[0,T]}$ is an almost surely finite supremum of a Gaussian family, and
$1/(1-\alpha)<2$, since $\alpha<1/2$.The first is proved as in \cite[Lemma 2.3]{mbfbm-limit}, but the proof is short and for completeness we repeat it here.

Denote $b_u = b(u,X_u)$ and write $$\ex{J_b^p}\le C_p(I_b' + I_b''),$$ where
\begin{gather*}
I_b' =\ex{\sup_{t\in[0,T]}\abs{\int_0^{t} b_s dW_s}^p }
\le C_p\ex{\left(\int_0^t \abs{b_s}^2ds\right)^{p/2}} <\infty,\\
I_b'' = \ex{\sup_{t\in[0,T]}\left(\int_0^t \abs{\int_s^t b_z dW_z} (t-s)^{-1-\alpha}ds\right)^p }.
\end{gather*}

By the Garsia--Rodemich--Rumsey inequality \cite[Theorem~1.4]{garsia}, for arbitrary $\eta\in(0,1/2-\alpha)$, $u,s\in[0,T]$
$$\abs{\int_u^s b_z dW_z}\le C \xi_\eta(T)\abs{s-u}^{1/2-\eta},
\quad \xi_\eta(t) = \left(\int_0^t \int_0^t \frac{\abs{\int_x^y b_v dW_v}^{2/\eta}} {\abs{x-y}^{1/\eta}}dx\,dy\right)^{\eta/2}.
$$
For $p\ge 2/\eta$  by the H\"older inequality
\begin{gather*}
\ex{\xi_\eta(t)^p}\le C_{p,\eta}\int_0^t \int_0^t \frac{\ex{\abs{\int_x^y b_v dW_v}^{p}}} {\abs{x-y}^{p/2}}dx\,dy\\
\le C_{p,\eta}\int_0^t \frac{\ex{\left(\int_x^y (b_v)^2 dv\right)^{p/2}}} {\abs{x-y}^{p/2}} dx\,dy<\infty,
\end{gather*}
whence
$$
I_b'' \le C \ex{\xi_\eta(T)^p}\sup_{t\in[0,T]}\left(\int_0^t (t-s)^{-1/2-\eta-\alpha}ds\right)^p <\infty,
$$
and the statement follows.
\end{proof}
As a corollary, we get a generalization of a result of \cite{Nual} where the existence of moments under linear growth of $c$ is proved only for $H>3/4$.
\begin{corollary}
The solution $X$ of \eqref{sde-pure} for each $p > 0$ satisfies
\begin{equation*}
\ex{\norm{X}_{\infty;T}^p}<\infty,
\end{equation*}
in particular,
\begin{equation*}
\ex{\sup_{t\in[0,T]}\abs{X_t}^p}<\infty.
\end{equation*}
\end{corollary}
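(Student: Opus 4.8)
The plan is to recognise that the pure fractional equation \eqref{sde-pure} is simply the mixed equation \eqref{sde-mixed} in the degenerate case $b\equiv0$, and then to invoke Theorem~\ref{moments-mixed} directly. First I would verify that the coefficient $b(t,x)\equiv0$ meets every standing hypothesis: the linear growth and Lipschitz bounds H1--H2 and the H\"older continuity in time H3 hold trivially for the zero function, and, most importantly, the extra boundedness assumption H4, $\abs{b(t,x)}\le C$, is automatic since $b\equiv0$. Thus all the hypotheses required by Theorem~\ref{moments-mixed} are in force for \eqref{sde-pure}.

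With this identification, Theorem~\ref{moments-mixed} applies verbatim to the solution $X$ of \eqref{sde-pure}, yielding $\ex{\norm{X}_{\infty;T}^p}<\infty$ for every $p>0$, and in particular $\ex{\sup_{t\in[0,T]}\abs{X_t}^p}<\infty$. No genuine obstacle arises, since the whole content of the corollary is already packaged in the preceding theorem; the only thing to check is that the zero diffusion coefficient $b$ does not violate any assumption, which it does not.

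The single point worth stressing is why this gives the advertised improvement over \cite{Nual}. When $b\equiv0$ the Wiener integral vanishes, so the term $J_b$ of Lemma~\ref{pathwiseestimatemixed} is absent and the entire estimate of $\ex{J_b^p}$ in the proof of Theorem~\ref{moments-mixed} becomes vacuous; what remains is $\norm{X}_{\infty;T}\le C\exp\set{C\norm{B^H}_{0;[0,T]}^{1/(1-\alpha)}}$, whose moments are finite because $\norm{B^H}_{0;[0,T]}$ is an almost surely finite supremum of a Gaussian family. This is exactly where the restriction $H>3/4$ of \cite{Nual} is removed: Gaussian integrability of $\exp\set{C\norm{B^H}_{0;[0,T]}^{\theta}}$ requires the exponent $\theta<2$; here $\theta=1/(1-\alpha)<2$ for every admissible $\alpha<1/2$, hence for all $H>1/2$, whereas the larger exponent $1/(1-2\alpha)$ appearing in the pure-equation estimate of \cite{Nual} stays below $2$ only when $\alpha<1/4$, which forces $1-H<1/4$, i.e.\ $H>3/4$.
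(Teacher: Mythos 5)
Your proof is correct and is exactly the argument the paper intends: \eqref{sde-pure} is \eqref{sde-mixed} with $b\equiv 0$, which trivially satisfies H1--H4 (in particular the boundedness H4), so Theorem~\ref{moments-mixed} applies verbatim. Your closing remark on why the exponent $1/(1-\alpha)<2$ removes the $H>3/4$ restriction of \cite{Nual} matches the paper's own remark following Lemma~\ref{pathwiseestimatemixed}.
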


\subsection{Existence of moments for the mixed equation with jumps}
Now turn to equation \eqref{sde-jumps}. To prove existence and uniqueness of its solution,  we did not make any assumptions about $\nu$ and $q$, except that $\nu$ has finite activity: $\Pi(\R)<\infty$.
To prove existence of moments, we will make further assumptions on the measure $\nu$ and the coefficient $q$ in  addition to H1--H4.

\begin{enumerate}[H1.]
\addtocounter{enumi}{4}
\item The measure $\nu$ is independent of $B^H$, $W$.
\item There exists a function $g\colon \R\to [0,\infty)$ such that for all $x\in\R$, $t\in[0,T]$
$$
\abs{q(t,x,y)}\le g(y)(1+\abs{x}).
$$
\item For all $p>0$
$$
\int_{\R} g(y)^p \Pi(dy)<\infty.
$$
\end{enumerate}

\begin{theorem}
The solution $X$ of \eqref{sde-jumps} for each $p > 0$ satisfies
\begin{equation*}
\ex{\sup_{t\in[0,T]}\abs{X_t}^p}<\infty.
\end{equation*}
\end{theorem}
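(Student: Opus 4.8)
The plan is to build on the recursive construction of $X$ from the proof of Theorem~\ref{thm_ex_sol}: on each interval $[\tau_k,\tau_{k+1})$ between successive jumps of $L$, the process $X$ solves a mixed equation of the form \eqref{sde-mixed} started from the post-jump value $X_{\tau_k}$, so Lemma~\ref{pathwiseestimatemixed} applies on each piece. Writing $q_0=1/(1-\al)$, $\Lambda_k=\norm{B^H}_{0;[\tau_k,\tau_{k+1}]}\vee1$, $E_k=\exp\set{C\Lambda_k^{q_0}}$, $g_k=g(\Delta L_{\tau_k})$, and $J^k$ for the analogue of $J_b=\norm{I_b}_{\infty;T}$ on the $k$th piece, the lemma (whose proof yields linear dependence on the initial value through the linear growth H1) gives $1+\sup_{[\tau_k,\tau_{k+1}]}\abs{X}\le CE_k(1+J^k)(1+\abs{X_{\tau_k}})$, while H6 gives $1+\abs{X_{\tau_k}}\le(1+g_k)(1+\abs{X_{\tau_k-}})$. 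Iterating over the a.s.\ finite number $N$ of jumps on $[0,T]$ produces the pathwise bound $1+\sup_{[0,T]}\abs{X}\le(1+\abs{X_0})\,C^{N+1}\prod_{k=0}^N E_k(1+J^k)\prod_{k=1}^N(1+g_k)$.

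Raising to the power $p$ and using Cauchy--Schwarz I would separate the fractional factors $\prod_k E_k$ from the It\^o factors $\prod_k(1+J^k)$. The latter can be controlled \emph{without} any independence between $W$ and $B^H$: since $b$ is bounded (H4), the $\mathbb F$-martingale $\int b\,dW$ has quadratic variation of rate at most $C$, so the conditional Burkholder--Davis--Gundy and Garsia--Rodemich--Rumsey estimates already used in Theorem~\ref{moments-mixed} give $\ex{(1+J^k)^{2p}\mid\mathcal F_{\tau_k}\vee\sigma(\nu)}\le\bar C_{2p}$, a constant depending only on $p,C,T,\al$ and in particular uniform in $k$ and in the initial value $X_{\tau_k}$. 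Here H5 is used only to guarantee that $W$ remains a martingale in the filtration enlarged by the independent data $\sigma(\nu)$; conditioning successively on $\mathcal F_{\tau_N},\mathcal F_{\tau_{N-1}},\dots$ then collapses the whole It\^o product into $\ex{\prod_{k=0}^N(1+J^k)^{2p}\mid\nu}\le\bar C_{2p}^{\,N+1}$, which is harmless as it is only exponential in $N$.

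It then remains to take expectations of the fractional part. Conditioning on $\nu$ and using H5 once more, the jump configuration is frozen while $B^H$ keeps its law, so everything reduces to bounding
\[
\Phi(\nu)=\Ex{\exp\set{pC\sum_{k=0}^{N}\norm{B^H}_{0;[\tau_k,\tau_{k+1}]}^{q_0}}\ \Big|\ \nu}.
\]
Given the (finite, by $\Pi(\R)<\infty$) number of jumps, the sizes $\Delta L_{\tau_k}$ are i.i.d.\ with law $\Pi/\Pi(\R)$, so by H7 the factor $\prod_k(1+g_k)^p$ has conditional expectation $\big(\int(1+g)^p\,d\Pi/\Pi(\R)\big)^N<\infty$; and $N$, being Poisson, possesses finite exponential moments of every order. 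Consequently $\ex{\sup_{[0,T]}\abs{X_t}^p}<\infty$ will follow as soon as $\Phi(\nu)$ grows at most exponentially in $N$, i.e.\ $\log\Phi(\nu)\le c(N+1)$ with $c$ independent of the configuration.

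This last bound is the heart of the matter and the step I expect to be hardest. Self-similarity gives $\norm{B^H}_{0;[s,t]}\stackrel{d}{=}(t-s)^{H-1+\al}\norm{B^H}_{0;[0,1]}$, so each summand carries the factor $(\tau_{k+1}-\tau_k)^{(H-1+\al)q_0}$ with exponent $(H-1+\al)q_0\in(0,1)$, and by the power-mean inequality $\sum_k\Lambda_k^{q_0}$ is typically only of order $N^{1-(H-1+\al)q_0}$, sublinear in $N$. The difficulty is that the summands are not independent: for $H>1/2$ the increments of $B^H$ are long-range dependent, so a crude majorization of every local norm by a single global H\"older constant forces the Gaussian exponent to scale with $N$ and yields a divergent (super-exponential) bound, while a Gaussian comparison inequality pushes in the wrong direction. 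I would instead exploit that each $\norm{B^H}_{0;[\tau_k,\tau_{k+1}]}$ is governed by the \emph{local} oscillation of $B^H$, which decorrelates across disjoint intervals even when the increments themselves do not: splitting $B^H$ into a pathwise-regular long-memory component (negligible for the $\norm{\cdot}_0$-seminorm on short intervals) and a remainder whose restrictions to disjoint intervals are essentially independent, and applying a Fernique/Gaussian-concentration estimate to the latter, should give $\log\Phi(\nu)\le c(N+1)$ uniformly and close the argument.
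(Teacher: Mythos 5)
Your overall architecture coincides with the paper's: reduce to the jump-free equation on each interval $[\tau_k,\tau_{k+1})$, apply the pathwise estimate of Lemma~\ref{pathwiseestimatemixed} with explicit linear dependence on the initial value, control the jumps by H6, iterate over the $N$ jumps, and then separate the resulting product by H\"older's inequality into a Poisson factor (handled by H7 and exponential moments of $N$), an It\^o factor (handled by BDG and Garsia--Rodemich--Rumsey, with H5 allowing the jump configuration to be frozen), and the fractional factor $\Phi(\nu)$. Your treatment of the It\^o product by conditional BDG in the filtration enlarged by $\sigma(\nu)$ is a legitimate and arguably cleaner variant of the paper's device of writing $b(s,X_s)=G(s,W,B^H,L)$ and substituting $l=L$; both rest on H5 in the same way, and this part of your argument is sound.

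The gap is exactly where you place it: the bound $\log\Phi(\nu)\le c(N+1)$ is asserted, not proved. The proposed decomposition of $B^H$ into a pathwise-regular long-memory component plus a remainder whose restrictions to disjoint intervals are \emph{essentially independent} is not constructed, and it is far from clear that such a decomposition exists in a form that makes the local seminorms $\norm{B^H}_{0;[\tau_k,\tau_{k+1}]}$ independent enough to run a product/Fernique argument; as you note yourself, for $H>1/2$ the increments are positively correlated at all lags, and that is precisely the obstruction. The paper closes this step by a different and decomposition-free device: it applies H\"older's inequality to $\prod_n e^{Z_n}$ with interval-dependent exponents $q_n=\Delta_n^{-\kappa}\sum_m\Delta_m^{\kappa}$, $\kappa=(\al+H-1)/(1-\al)$, so that by self-similarity every factor $\ex{e^{q_nZ_n}\mid L}$ equals the same quantity $\ex{\exp\set{4KpS\norm{B^H}_{0;[0,1]}^{1/(1-\al)}}}$ with $S=\sum_m\Delta_m^{\kappa}$, and the product of the $1/q_n$-th powers collapses to a single such expectation; no independence across intervals is ever invoked. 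Be aware, though, that the paper's concluding claim $\kappa>1$ (whence $S\le T^{\kappa}$) does not hold as stated: since $\al<1/2$ and $H<1$ one has $\kappa\le 2H-1<1$, so $S$ may be of order $(N+1)^{1-\kappa}$ and even the paper's route requires a further argument (a Fernique-type tail for $\norm{B^H}_{0;[0,1]}$ played against the Poisson tail of $N$) to finish. In short, your proposal correctly isolates the hard step but supplies neither a proof of it nor the H\"older/self-similarity mechanism that the paper uses in its place.
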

\begin{proof}Let $\tau_n$ be the moment of the $n$th jump of process $L$.
As in the proof of  \ref{thm_ex_sol}, consider for $u\ge 0$ the equation
$$
Y_t = y + \int_0^t a(s+u,Y_s)ds + \int_0^t b(s+u,Y_s)dV_s + \int_0^t c(s+u,Y_s) dZ_s,
$$
where $V$ is a Wiener process, $Z$ is an adapted process with  almost surely $\gamma$-H\"older continuous paths for some $\gamma>1/2$, and denote the unique solution to this equation by $Y(u,t,y,V,Z)$.
A reasoning similar to that used in the proof of Lemma~\ref{pathwiseestimatemixed} gives
\begin{equation}
\label{solestimate}
\norm{Y(u,\cdot,y,V,X)}_{\alpha;[0,t]}\le  K\abs{y} \exp\set{K\norm{Z}_{0;[0,t]}^{1/(1-\alpha)}}(1+\norm{I_{b,Y}}_{\infty;t}),
\end{equation}
where $I_{b,Y}(t) = \int_0^t b(s+u,Y_s) dV_s$, the constant $K$ depends only on constants in assumptions H1--H4, without loss of generality we assume $K>1$.
Hence we get for $t<\tau_1$
\begin{gather*}
\abs{X_t} \le K\abs{X_0}\exp\set{K\norm{B^H}_{0;[0,\tau_1]}^{1/(1-\alpha)}}\left(1+ \norm{I_{b}}_{\infty;\tau_1}\right),
\end{gather*}
where $I_{b}(t) = \int_0^t b(s,X_s) dW_s$.

For convenience denote $\abs{x}_1=\abs{x}\vee 1$ and assume without loss of generality that $g(x)>1$, $x\in\R$.
Then
\begin{equation}\label{jumpestimate}
\sup_{t< \tau_1}\abs{X_t}_1 \le K\abs{X_0}_1\exp\set{K\norm{B^H}_{0;[0,\tau_1]}^{1/(1-\alpha)}} (1+J_{b}),
\end{equation}
where
\begin{gather*}
J_b = \sup_{0\le s\le t\le T} \int_s^t \left( \abs{\int_s^u b(v,X_v) dW_v}(u-s)^{-\alpha}\right.\\
 \left.+ \int_s^u \abs{\int_z^u b(v,X_v) dW_v} (u-z)^{-1-\alpha}dz \right)du.
\end{gather*}
Further,
\begin{gather*}
\abs{X_{\tau_1}}_1\le  \abs{X_{\tau_1-}}_1 + \abs{q(\tau_1,X_{\tau_1-},\Delta L_{\tau_1})}_1\\
\le \abs{X_{\tau_1-}}_1 + 2g(\Delta L_{\tau_1})\abs{X_{\tau_1-}}_1 \le  3g(\Delta L_{\tau_1})\abs{X_{\tau_1-}}_1\\
\le 3Kg(\Delta L_{\tau_1})\abs{X_0}_1\exp\set{K\norm{B^H}_{0;[0,\tau_1]}^{1/(1-\alpha)}} (1+J_{b}).
\end{gather*}
Using inequality \eqref{solestimate} consequently on $[\tau_1,\tau_2)$, $[\tau_2,\tau_3)$, $\dots$ and
estimating jumps of $X$ as in \eqref{jumpestimate}, we arrive at
\begin{gather*}
\sup_{t\in[0,T]}\abs{X_{t}}\le \sup_{t\in[0,T]}\abs{X_{t}}_1\le (3K)^{N(T)}(1+J_{b})^{N(T)+1}\abs{X_0}_1\exp\set{K\norm{B^H}_{0;[0,\tau_1\wedge T]}^{1/(1-\alpha)}}\\
\times \prod_{n:\tau_n\le T}g(\Delta L_{\tau_n})\exp\set{K\norm{B^H}_{0;[\tau_n,\tau_{n+1}\wedge T]}^{1/(1-\alpha)}}\\
\le C(6K)^{N(T)}\left(1+J_{b}^{N(T)+1}\right)\exp\set{K\norm{B^H}_{0;[0,\tau_1\wedge T]}^{1/(1-\alpha)}}\\
\times \prod_{n:\tau_n\le T}g(\Delta L_{\tau_n})\exp\set{K\norm{B^H}_{0;[\tau_n,\tau_{n+1}\wedge T]}^{1/(1-\alpha)}}
\end{gather*}
where $N(T)$ denotes the number of jumps of the process $L$ on $[0,T]$. Hence, by the H\"older inequality,
\begin{equation}\label{exsupxtp}
\begin{gathered}
\ex{\sup_{t\in[0,T]}\abs{X_{t}}^{p}}\le C_p\left(\vphantom{\ex{\prod_{n=1}^{N(T)} g(\Delta L_{\tau_n})^{4p}}}\ex{(6K)^{4pN(T)}}
\left(1+\ex{J_{b}^{4p(N(T)+1)}}\right)\right.\\
\times\left.\ex{\prod_{n=1}^{N(T)} g(\Delta L_{\tau_n})^{4p}}\ex{\prod_{n=0}^{N(T)}\exp\set{4Kp\norm{B^H}_{0;[\tau_n,\tau_{n+1}\wedge T]}^{1/(1-\alpha)}}}\right)^{1/4},
\end{gathered}
\end{equation}
with $\tau_0=0$. Since the jumps of $L$ are jointly independent and do not depend on $N(T)$, which has a Poisson distribution, we have
$\ex{(6K)^{4pN(T)}}<\infty$ and
\begin{gather*}
\ex{\prod_{n=1}^{N(T)} g(\Delta L_{\tau_n})^{4p}} = \exp\set{(\ex{g(\Delta L_{\tau_1})^{4p}}-1)\Pi(\R)T}\\
= \exp\set{\left(\int_{\R}g(y)^{4p}\Pi(dy)-1\right)\Pi(\R)T}<\infty.
\end{gather*}
Further, write
\begin{gather*}
\ex{J_{b}^{4p(N(T)+1)}} = \ex{\ex{J_{b}^{4p(N(T)+1)}\,\middle\vert\, L}}
\end{gather*}
From the formula for the solution to \eqref{sde-jumps}, obtained in the proof of Theorem~\ref{thm_ex_sol}, we get that
$X_t = F(t,W,B^H,L)$, where $F$ is certain non-random measurable function. Thus, we can write  $b(s,X_s) =
G(s,W,B^H,L)$, where $G$ is a non-random  bounded function. Therefore, since $W$ and $B^H$ do not depend on $L$, we obtain
\begin{gather*}
\ex{J_{b}^{4p(N(T)+1)}\,\middle\vert\, L} =
\ex{\sup_{0\le s\le t\le T} \norm{\int_s^\cdot G(u,W,B^H,l) dW_u}_{\alpha;[s,t]}^{4pk}}
\bigg|_{l=L, k= N(T)+1}.
\end{gather*}
Abbreviate  $G_u(l) = G(u,W,B^H,l)$ and estimate
\begin{gather*}
\norm{\int_s^\cdot G_u(l) dW_u}_{\alpha;[s,t]}\\\le \abs{\int_0^t G_u(l) dW_u} + \abs{\int_0^s G_u(l) dW_u}
+ \int_s^t \abs{\int_u^t G_z(l) dW_z}(t-u)^{-1-\alpha} du.
\end{gather*}
Then
\begin{gather*}
\ex{\sup_{0\le s\le t\le T} \norm{\int_s^\cdot G_u(l) dW_u}_{\alpha;[s,t]}^{4pk}}\le
2 (I_1 + I_2),
\end{gather*}
where
\begin{gather*}
I_1 = \ex{\sup_{t\in[0,T]} \abs{\int_0^t G_u(l) dW_u}^{4pk}} \le 
\ex{\left(\int_0^TG_u(l)^2 du\right)^{2pk}}\le K_1^k,\\
I_2 = \ex{\sup_{0\le s\le t\le T} \left(\int_s^t \abs{\int_u^t G_z(l) dW_z}(t-u)^{-1-\alpha} du\right)^{4pk}},
\end{gather*}
with some constant $K_1$ independent of $l,k$. The term $I_2$ is estimated as in the proof of Theorem~\ref{moments-mixed}:
from the Garsia--Rodemich--Rumsey inequality we get for any $\eta\in(0,1/2-\alpha)$, $u,s\in[0,T]$
$$\abs{\int_u^s G_z(l) dW_z}\le C_{\eta} \xi_\eta(T)\abs{s-u}^{1/2-\eta},$$
with
$$\xi_\eta(t) = \left(\int_0^t \int_0^t \frac{\abs{\int_x^y G_v(l) dW_v}^{2/\eta}} {\abs{x-y}^{1/\eta}}dx\,dy\right)^{\eta/2}.$$
Now for  $4pk \ge 2/\eta$
\begin{gather*}
\ex{\xi_\eta(T)^{4pk}}\le C_{\eta}^{4pk}T^{2 - 4pk\eta} \int_0^T \int_0^T \frac{\ex{\abs{\int_x^y G_v(l) dW_v}^{4pk}}} {\abs{x-y}^{2pk}}dx\,dy\\
\le C_{\eta}^{4pk}T^{2 - 4pk\eta} \int_0^t \frac{\ex{\left(\int_x^y G_v(l)^2 dv\right)^{2pk}}} {\abs{x-y}^{2pk}} dx\,dy
\le K_2^{k},
\end{gather*}
where the constant $K_2$ is independent of $l,k$. Consequently,
\begin{gather*}
I_2\le K_2^{k} \sup_{0\le s\le t\le T} \left(\int_s^t (t-u)^{-1/2-\alpha-\eta} du\right)^{4pk}\le (K_2 T^{4p(1/2-\alpha-\eta)})^{k}.
\end{gather*}
Collecting all estimates, we get
\begin{gather*}
\ex{\sup_{0\le s\le t\le T} \norm{\int_s^\cdot G_u(l) dW_u}_{\alpha;[s,t]}^{4pk}}\le
K_3^{k}
\end{gather*}
with $K_3$ independent of $l,k$. Therefore,
\begin{gather*}
\ex{J_{b}^{4p(N(T)+1)}} \le \ex{K_3^{N(T)+1}}<\infty.
\end{gather*}
Consider the last multiple in \eqref{exsupxtp}. Denote $Z_n = 4Kp\norm{B^H}_{0;[\tau_n,\tau_{n+1}\wedge T]}^{1/(1-\alpha)}$.
By the H\"older inequality,
\begin{gather*}
\ex{\prod_{n=0}^{N(T)}e^{Z_n}}\le \prod_{n=0}^{N(T)} \ex{e^{q_n Z_n}}^{1/q_n}= \prod_{n=0}^{N(T)}\ex{\ex{e^{q_n Z_n}|L}}^{1/q_n},
\end{gather*}
where
$$
q_n = \Delta_n^{-\kappa}\sum_{n=0}^{N(T)} \Delta_n^{\kappa},\quad \Delta_n = \tau_{n+1}\wedge T -\tau_n,\quad \kappa = \frac{\alpha+H-1}{1-\alpha}.
$$
Using the independence of  $B^H$ and $L$, write
$$
\ex{e^{q_n Z_n}|L} = \ex{\exp\set{4KpS (b-a)^{-\kappa}  \norm{B^H}_{0;[a,b]}^{1/(1-\alpha)}}}\Big\vert_{a= \tau_n,b=\tau_{n+1}\wedge T,S=\sum_{n=0}^{N(T)} \Delta_n^{\kappa}}.
$$
In view of the self-similarity property of $B^H$, it is easy to check that
$$
(b-a)^{-\kappa} \norm{B^H}_{0;[a,b]}^{1/(1-\alpha)}\overset{d}{=}\norm{B^H}_{0;[0,1]}^{1/(1-\alpha)}.
$$
Consequently,
\begin{gather*}
\ex{\prod_{n=0}^{N(T)}e^{Z_n}}\le \ex{\ex{\exp\set{4Kp S \norm{B^H}_{0;[0,1]}^{1/(1-\alpha)}}}
\Big\vert_{S=\sum_{n=0}^{N(T)} \Delta_n^{\kappa}}}
\end{gather*}
It is easy to check that $\kappa>1$. Therefore, $$\sum_{n=0}^{N(T)} \Delta_n^{\kappa}\le \left(\sum_{n=0}^{N(T)} \Delta_n\right)^{\kappa}=T^\kappa.$$
This implies
\begin{gather*}
\ex{\prod_{n=0}^{N(T)}e^{Z_n}}\le \ex{\exp\set{4Kp T^{\kappa} \norm{B^H}_{0;[0,1]}^{1/(1-\alpha)}}},
\end{gather*}
which is finite as $\norm{B^H}_{0;[0,1]}$ is a finite supremum of a Gaussian family and $1/(1-\alpha)<2$. The proof is now complete.
\end{proof}

\end{document}